\newtheorem{theorem}{Theorem}[section]
\newtheorem{lemma}[theorem]{Lemma}
\newtheorem{proposition}{Proposition}[section]
\theoremstyle{definition}
\theoremstyle{remark}
\numberwithin{equation}{section} \errorcontextlines=0
\title{On an optimal problem of bilinear forms}
\date{ }
\author{Naihuan Jing$^{1}$,  Yibo Liu$^{2}$,  Jiacheng Sun$^{2}$,  Chengrui Zhao$^{2}$,  Haoran Zhu$^{2}$}
\address{$^{1}$Department of Mathematics, North Carolina State University, Raleigh, NC 27695, USA}
\email{jing@ncsu.edu}
\address{$^{2}$College of Sciences, Northeastern University, Shenyang, Liaoning 110004, China}
\email{1431958945@qq.com, sjcsrc0927@163.com, 17761598896@163.com, whrzhu@outlook.com}
\begin{document}
	\maketitle
	
	\begin{abstract}
		We study an optimization problem originated from the Grothendieck constant. A generalized normal equation is proposed and analyzed. We establish a correspondence between solutions of the general normal equation and its dual equation. Explicit solutions are described for the two-dimensional case.
	\end{abstract}

	\section{Introduction}
	The Grothendieck constant $K_G$ \cite{G} is the smallest constant such that for every $d \in \mathbb{N}$ and every matrix $A=(a_{ij})$ (real or complex),
	$$\sup_{u_i,v_j \in B^{(d)}} \sum\limits_{ij}a_{ij}\left<u_i,v_j\right>\leqslant K_G\cdot\sup_{x_i,y_j=\pm 1}\sum\limits_{ij}a_{ij}x_iy_j,$$
	where $B^{(d)}=\{{\bf x}\in \mathbb R^d| \, ||{\bf x}||=1\}$ is the unit ball in $\mathbb{R}^d$. Despite much efforts, the value of the constant $K_G$ remains unknown \cite{P, H} for both the real and the complex cases. It is known that the Grothendieck constant is related to the Bell inequality in quantum nonlocality of
quantum computation \cite{T, A}.
The Grothendieck constant $K_G$ is also interpreted as the integrality gap of a natural semidefinite programming
relaxation for the so-called $K_{M,N}$-quadratic programming problem \cite{AN, KN}.
	
	In this paper, we study a simplified problem that can be cast as an optimization one using elementary methods.
The idea is to first formulate the (real) Grothendieck constant as some generalized eigenvalue problem and then
solve the simplified version.

\section{An optimal problem and generalized normal equations}
	
	We first recall some basic material on using the singular value decomposition (SVD) to solve maximum and minimum values of
quadratic form. 

	Let $q(x)=\sum\limits_{i,j}a_{ij}x_ix_j=x^TAx$ be a real quadratic form, where $A=(a_{ij})$ is an $n\times n$ real symmetric matrix.
The maximum and the minimum of $q(x)$ on the sphere $|x|=1$ are given by the eigenvalues of $A$.

More generally, let $A$ be an $m\times n$ rectangular real matrix  and consider
the optimization problem for the blinear form
$B(x, y)=x^TAy$ subject to $|x|=|y|=1$.
\begin{lemma}\label{lem1} The maximal value of $B(x, y)=x^TAy$ subject to $|x|=|y|=c$
is $c^2\sigma_1(A)$, where $\sigma_1(A)$ is the largest singular value of $A$.
\end{lemma}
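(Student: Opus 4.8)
The plan is to reduce the bilinear optimization to a diagonal problem via the singular value decomposition and then apply the Cauchy--Schwarz inequality. First I would write $A = U\Sigma V^T$, where $U$ is an $m\times m$ orthogonal matrix, $V$ is an $n\times n$ orthogonal matrix, and $\Sigma$ is the $m\times n$ matrix whose diagonal entries are the singular values $\sigma_1(A)\geqslant \sigma_2(A)\geqslant\cdots\geqslant 0$. Substituting this into $B(x,y)=x^TAy$ and introducing the new variables $u=U^Tx$ and $v=V^Ty$ turns the objective into $B=u^T\Sigma v=\sum_k \sigma_k(A)\,u_k v_k$. Because $U$ and $V$ are orthogonal they preserve Euclidean length, so the constraints $|x|=|y|=c$ are equivalent to $|u|=|v|=c$; the change of variables is a bijection of the constraint set onto itself, and the two optimization problems have the same maximum.

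The second step is to bound the diagonal objective. Using $\sigma_k(A)\geqslant 0$ together with $\sigma_k(A)\leqslant \sigma_1(A)$, I would estimate
\[
\sum_k \sigma_k(A)\,u_k v_k \leqslant \sum_k \sigma_k(A)\,|u_k|\,|v_k| \leqslant \sigma_1(A)\sum_k |u_k|\,|v_k| \leqslant \sigma_1(A)\,|u|\,|v| = c^2\sigma_1(A),
\]
where the final inequality is Cauchy--Schwarz applied to the vectors $(|u_k|)_k$ and $(|v_k|)_k$. This yields the upper bound $B(x,y)\leqslant c^2\sigma_1(A)$ for all admissible $x,y$.

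Finally I would check that the bound is attained, so that it is genuinely the maximum. Choosing $u=v=c\,e_1$, the first scaled coordinate vector, gives $\sum_k \sigma_k(A)\,u_k v_k = c^2\sigma_1(A)$ while satisfying $|u|=|v|=c$. Pulling back through the orthogonal substitution, the maximizers are $x=c\,Ue_1$ and $y=c\,Ve_1$, that is, $c$ times the leading left and right singular vectors of $A$. I do not anticipate a serious obstacle, as the statement is classical; the only point requiring care is tracking the signs and absolute values through the chain of inequalities, which is exactly why aligning both $u$ and $v$ with the top singular direction achieves equality and makes the Cauchy--Schwarz step tight.
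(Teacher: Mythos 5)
Your proof is correct and follows essentially the same route as the paper: a singular value decomposition with an orthogonal change of variables to diagonalize the form, the Cauchy--Schwarz bound $\sum_k \sigma_k(A)|u_k||v_k| \leqslant c^2\sigma_1(A)$, and attainment by aligning both vectors with the leading singular direction. No gaps; your version is if anything slightly more careful than the paper's in stating the change of variables explicitly.
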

\begin{proof}
By the singular value decomposition, the matrix $A$ can be written as
$A=U^T\Lambda V$ for two orthogonal matrices $U$, $V$ and $\Lambda=diag(\sigma_1, \ldots, \sigma_r)$, where $\sigma_1 \ge \sigma_2 \ge \dots \ge \sigma_r>0$.
Then
for $X=Ux, Y=Vy$
$$x^TAy=X^T\Lambda Y=\sigma_1X_1Y_1+\sigma_2X_2Y_2+\dots+\sigma_rX_rY_r.$$
Consequently
\begin{align*}
\sigma_1X_1Y_1+\sigma_2X_2Y_2+\dots+\sigma_rX_rY_r &\le\sigma_1(|X_1||Y_1|+\cdots+|X_r||Y_r|)\\
&\le \sigma_1\sqrt{\sum_i|X_i|^2}\sqrt{\sum_i|Y_i|^2}= c^2\sigma_1
\end{align*}
Clearly $\pm c^2\sigma_1$ are retainable ($X_1=\pm c,Y_1=c, X_2=\dots=X_r=0$), so they are the maximum and minimum of $B(x, y)$ under the constraint $|x|=|y|=c$.
\end{proof}




Now we consider the general situation. Let $B(\vec{x},\vec{y})$ be the bilinear function $(\mathbb R^d)^n\times (\mathbb R^d)^n\mapsto \mathbb R$:
\begin{equation}\label{e:bilin}
B(\vec{x},\vec{y})=\sum_{k,j=1}^na_{kj}\vec{x}_k\cdot\vec{y}_j=\vec{x}^T\cdot A\vec{y}
\end{equation}
where $\vec{x}=(\vec{x}_1, \cdots, \vec{x}_n), \vec{y}=(\vec{y}_1, \cdots, \vec{y}_n)\in (\mathbb R^d)^n$ and we would like to compute its maximum under the constraint $|\vec{x}_k|=|\vec{y}_k|=1$. 

The variables
$\vec{x}_k=(x_{kr}), \vec{y}_k=(y_{kr})\in \mathbb R^d$ are points of the unit sphere: $\sum_{r=1}^dx_{kr}^2=\sum_{r=1}^dy_{kr}^2=1$. 
Let $F(x,y,\lambda,\mu)=B(\vec{x},\vec{y})-\dfrac{1}{2}\sum_{k=1}^n\lambda_k(|x_k|^2-1)-\dfrac{1}{2}\sum_{k=1}^n\mu_k(|y_k|^2-1)$, where
$\lambda_k, \mu_k$ are parameters.   Using the Lagrange multipliers, we have for any $1\le k\le n, 1\le r\le d$
\begin{align}\label{e:LM1}
&\sum_{j=1}^na_{kj}y_{jr}-\lambda_kx_{kr}=0, \\ \label{e:LM2}
&\sum_{j=1}^na_{jk}x_{jr}-\mu_ky_{kr}=0.
\end{align}
Multiplying \eqref{e:LM1} by $x_{kr}$ and taking sums over $r$, we have
\begin{equation}
\sum_{r=1}^d(\sum_{j=1}^na_{kj}y_{jr})x_{kr}-\sum_{r=1}^d\lambda_kx^2_{kr}=0.
\end{equation}
Then $\lambda_k=\sum_{j=1}^na_{kj}\vec{x_k}^T\vec{y_j}$, subsequently at an optimal point $(\vec{x}, \vec{y})$
\begin{equation}
B(\vec{x}, \vec{y})=\sum_{k=1}^n\sum_{j=1}^na_{kj}\vec{x_k}^T\vec{y_j}=\sum_{k=1}^n\lambda_k.
\end{equation}
Similarly, $\mu_k=\sum_{j=1}^na_{jk}\vec{x_k}^T\vec{y_j}$ and we also have at the optimal point $(\vec{x}, \vec{y})$
\begin{equation}
B(\vec{x}, \vec{y})=\sum_{k=1}^n\sum_{j=1}^na_{kj}\vec{x_k}^T\vec{y_j}=\sum_{k=1}^n\mu_k,
\end{equation}
therefore the optimal value is $\sum_i\lambda_i=\sum_i\mu_i$.

To determine the extremal points $\vec{x}_k, \vec{y}_k$, assume that $\lambda_k\neq 0$ we can solve \eqref{e:LM1}
\begin{equation}\label{e:opt1}
x_{kr}=\dfrac{1}{\lambda_k}\sum_{j=1}^na_{kj}y_{jr}.
\end{equation}
It follows from plugging \eqref{e:opt1} into \eqref{e:LM2} that
\begin{equation}
\sum_{i,j=1}^n\dfrac{1}{\lambda_i}a_{i k}a_{i j}y_{j r}-\mu_k y_{k r}=0,
\end{equation}
which can be written as
\begin{equation}
\begin{pmatrix}a_{1 k}& a_{2k} & \dots&a_{nk}\end{pmatrix}\begin{pmatrix}{\lambda_1^{-1}}&0& \cdots& 0\\0&\lambda_2^{-1} & \cdots & 0
\\\vdots & \vdots& \ddots  & \vdots \\ 0 & 0 &\cdots &{\lambda_n^{-1}}
\end{pmatrix}A\begin{pmatrix}y_{1r}\\ y_{2r} \\ \vdots\\y_{nr}\end{pmatrix}=\mu_ky_{kr}.
\end{equation}
Combing all optimal points $\vec{y}_k=(y_{k1}, \cdots, y_{kd})$, $1\le k\le n$, we have that for $1\le r\le d$
\begin{equation}\label{e:gennorm1}
A^T\begin{pmatrix}{\lambda_1^{-1}}&0& \cdots& 0\\0&\lambda_2^{-1} & \cdots & 0
\\\vdots & \vdots& \ddots  & \vdots \\ 0 & 0 &\cdots &{\lambda_n^{-1}}
\end{pmatrix}A\begin{pmatrix}y_{1r}\\ y_{2r} \\\vdots\\y_{nr}\end{pmatrix}
=\begin{pmatrix}{\mu_1}&0& \cdots& 0\\0&\mu_2 & \cdots & 0
\\\vdots & \vdots& \ddots  & \vdots \\ 0 & 0 &\cdots &{\mu_n}
\end{pmatrix}\begin{pmatrix}y_{1r}\\y_{2r}\\\vdots\\y_{nr}\end{pmatrix}
\end{equation}

Similarly, we get the equation:
\begin{equation}\label{e:gennorm2}
A\begin{pmatrix}{\mu_1^{-1}}&0& \cdots& 0\\0&\mu_2^{-1} & \cdots & 0
\\\vdots & \vdots& \ddots  & \vdots \\ 0 & 0 &\cdots &{\mu_n^{-1}}
\end{pmatrix}A^T\begin{pmatrix}x_{1r}\\x_{12}\\\vdots\\x_{nr}\end{pmatrix}
=\begin{pmatrix}{\lambda_1}&0& \cdots& 0\\0&\lambda_2 & \cdots & 0
\\\vdots & \vdots& \ddots  & \vdots \\ 0 & 0 &\cdots &{\lambda_n}
\end{pmatrix} \begin{pmatrix} x_{1r} \\x_{12}\\ \vdots \\ x_{nr} \end{pmatrix}
\end{equation}

We call equations \eqref{e:gennorm1}-\eqref{e:gennorm2}
 the {\it generalized normal equations}, and the vectors $x_r=(x_{1r}, \ldots, x_{nr})$ and $y_r=(y_{1r}, \ldots, y_{dr})$ the {\it generalized
  singular vectors} associated with the pair of generalized (diagonal) singular matrices $\Lambda=diag(\lambda_1, \ldots, \lambda_n)$ and $M=diag(\mu_1, \ldots, \mu_n)$ such that $tr(\Lambda)=tr(M)$.

 \begin{proposition} The optimal point of the bilinear function $B(\vec{x}, \vec{y})=\sum_{k,j=1}^na_{kj}\vec{x}_k\cdot\vec{y}_j=\vec{x}^T\cdot A\vec{y}$ on
$(\mathbb R^d)^n$ under
the coonstraint $|\vec{x}_k|=|\vec{y}_k|=1 \ (1\le k\le n)$ obeys the following generalized normal equation
\begin{align}\label{e:normal1}
	(A^T\Lambda^{-1}A-M)\begin{pmatrix}\vec{y}_1\\ \vdots \\ \vec{y}_n\end{pmatrix}&=0,\\ \label{e:normal2}
	(AM^{-1}A^T-\Lambda)\begin{pmatrix}\vec{x}_1\\ \vdots \\ \vec{x}_n\end{pmatrix}&=0.
\end{align}
and the optimal value is $tr(\Lambda)=tr(M)$.
\end{proposition}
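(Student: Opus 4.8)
The plan is to treat this as a constrained optimization problem and let the generalized normal equations emerge as the first-order stationarity conditions after eliminating one family of variables. First I would observe that the constraints $|\vec{x}_k|=|\vec{y}_k|=1$ are equivalent to $|x_k|^2=|y_k|^2=1$ and introduce one Lagrange multiplier per constraint, forming the Lagrangian
$$F = B(\vec{x},\vec{y}) - \tfrac12\sum_{k=1}^n\lambda_k(|x_k|^2-1) - \tfrac12\sum_{k=1}^n\mu_k(|y_k|^2-1).$$
Differentiating $F$ with respect to each coordinate $x_{kr}$ and $y_{kr}$ and setting the derivatives to zero reproduces exactly the coordinate conditions \eqref{e:LM1} and \eqref{e:LM2}, which therefore hold for all $1\le k\le n$ and $1\le r\le d$ at any optimal point.

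The second step identifies the multipliers with the value of the form. Contracting \eqref{e:LM1} with $x_{kr}$ and summing over $r$, then using $\sum_r x_{kr}^2=1$, gives $\lambda_k=\sum_j a_{kj}\,\vec{x}_k\cdot\vec{y}_j$; the analogous contraction of \eqref{e:LM2} with $y_{kr}$ gives $\mu_k=\sum_j a_{jk}\,\vec{x}_j\cdot\vec{y}_k$. Summing over $k$ in each case yields $\sum_k\lambda_k = B(\vec{x},\vec{y}) = \sum_k\mu_k$, which simultaneously shows that the optimal value is $\operatorname{tr}(\Lambda)=\operatorname{tr}(M)$ and records the trace constraint built into the definition of the generalized singular matrices.

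The third step is the elimination that turns the first-order conditions into the normal equations. Assuming $\lambda_k\neq 0$ (equivalently, that $\Lambda$ is invertible, which is precisely what allows $\Lambda^{-1}$ to appear), I would solve \eqref{e:LM1} for $x_{kr}$ as in \eqref{e:opt1} and substitute into \eqref{e:LM2}; collecting the double sum over $i,j$ produces the scalar identity $\sum_{i,j}\lambda_i^{-1}a_{ik}a_{ij}y_{jr}=\mu_k y_{kr}$, which is \eqref{e:gennorm1} for each fixed $r$. The dual elimination, now using $\mu_k\neq 0$ and substituting the $y$-analogue of \eqref{e:opt1} into the remaining set, yields \eqref{e:gennorm2}. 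Finally I would repackage these coordinatewise relations: since $A$, $\Lambda$ and $M$ act only on the index $k$ and leave the coordinate index $r$ untouched, the $d$ scalar equations (one per $r$) assemble into a single block identity in which each scalar matrix entry multiplies the corresponding $d$-vector block $\vec{y}_j$ or $\vec{x}_j$. This is exactly the form displayed in \eqref{e:normal1} and \eqref{e:normal2}.

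The main obstacle I anticipate is not a hard computation but the two structural assumptions that make the statement well posed. The first is the invertibility of $\Lambda$ and $M$: a priori some multiplier could vanish at an optimum, in which case \eqref{e:LM1} degenerates into the linear constraint $\sum_j a_{kj}y_{jr}=0$ and must be handled separately, since the inverse matrices $\Lambda^{-1},M^{-1}$ are no longer defined. The second is the bookkeeping that legitimately combines the $d$ coordinatewise equations into the compact block form; here one must be explicit that the relevant operators act on the $k$-index by tensoring with $I_d$, so that stacking the $\vec{y}_k$ (respectively $\vec{x}_k$) commutes with the matrix multiplications. Once these two points are made precise, the proposition follows by reading off the assembled equations together with the trace identity established in the second step.
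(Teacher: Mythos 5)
Your proposal is correct and takes essentially the same route as the paper: the identical Lagrangian with multipliers $\lambda_k,\mu_k$, the same contraction of \eqref{e:LM1} and \eqref{e:LM2} against $x_{kr}$ and $y_{kr}$ to identify the optimal value with $tr(\Lambda)=tr(M)$, and the same elimination of $x_{kr}$ under the assumption $\lambda_k\neq 0$ to assemble the coordinatewise relations into \eqref{e:normal1}--\eqref{e:normal2}. Your explicit flagging of the invertibility of $\Lambda$ and $M$ is slightly more careful than the paper (which assumes $\lambda_k\neq 0$ only in passing), and your formula $\mu_k=\sum_j a_{jk}\,\vec{x}_j\cdot\vec{y}_k$ corrects a transposed-index slip in the paper's corresponding expression.
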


The conditions \eqref{e:gennorm1} and \eqref{e:gennorm2} are equivalent, as shown as follows.

Suppose $(A^T\Lambda^{-1}A-M)x=0$ for some nonzero $x\in\mathbb R^n$, then $(A^T\Lambda^{-1})(AM^{-1})$ has eigenvalue $1$.
It follows that $(AM^{-1})(A^T\Lambda^{-1})$ also has eigenvalue $1$, so there exists nonzero $y\in\mathbb R^n$ such that
$(AM^{-1}A^T-\Lambda)y=0$. This shows that \eqref{e:gennorm1} and \eqref{e:gennorm2} are equivalent.

We remark that the diagonal entries $\Lambda$ and $M$ can be viewed as squares of generalized singular values. In fact,
if $\Lambda=M=\lambda I$, then \eqref{e:normal1} becomes
\begin{align}
	(\lambda^{-1}A^TA-\lambda I)y=0,
\end{align}
which implies that $\lambda$ is a singular value.

%
%

\section{Two-dimensional cases}

The one-dimensional case problem concerns the optimal value of
$B(x,y)=x^TAy=\sum_{k,j=1}^{n}a_{kj}x_ky_j$ under the constraint $x_i,y_i \in \{1, -1\}$. This problem can be
solved by listing all possible values and pick up the maximum one.

Another simplified situation is the optimal problem of the blinear function
$B(\vec{x},\vec{y})=\vec{x}^TA\vec{y}=\sum_{k,j=1}^na_{kj}\vec{x_k}\vec{y_j}$, and $\vec{x_k},\vec{y_j} \in \mathbb{R}^d$ under the constraint $|\vec{x}|=|\vec{y}|=n$. As a result, we have the following estimate.

\begin{theorem} Let $A$ be a real $n\times n$ matrix. The maximum and minimum of the bilinear function
$B(\vec{x},\vec{y})=\sum_{k,j=1}^na_{kj}\vec{x}_k\cdot\vec{y}_j$ under the constraint $|\vec{x}_k|=|\vec{y}_k|=1$ are bounded by
$\pm n^2d\sigma_1(A)$, where $\sigma_1(A)$ is the largest singular value of $A$.
\end{theorem}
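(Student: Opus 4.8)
The plan is to split the bilinear form coordinatewise and reduce to the rectangular matrix estimate already established in Lemma~\ref{lem1}. Writing $\vec{x}_k=(x_{k1},\dots,x_{kd})$ and $\vec{y}_j=(y_{j1},\dots,y_{jd})$ and regrouping the $r$-th coordinates into the vectors $x^{(r)}=(x_{1r},\dots,x_{nr})\in\mathbb R^n$ and $y^{(r)}=(y_{1r},\dots,y_{nr})\in\mathbb R^n$, the identity $\vec{x}_k\cdot\vec{y}_j=\sum_{r=1}^d x_{kr}y_{jr}$ lets me rewrite
\[
B(\vec{x},\vec{y})=\sum_{k,j=1}^n a_{kj}\sum_{r=1}^d x_{kr}y_{jr}=\sum_{r=1}^d (x^{(r)})^T A\,y^{(r)}.
\]
Each summand is now an ordinary bilinear form in two $n$-vectors, so the homogeneous operator-norm inequality underlying Lemma~\ref{lem1}, namely $u^TAv\le\sigma_1(A)\,|u|\,|v|$ for all $u,v\in\mathbb R^n$, gives $(x^{(r)})^TAy^{(r)}\le\sigma_1(A)\,|x^{(r)}|\,|y^{(r)}|$ for every $r$.

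The step I expect to require the most care is the aggregation over $r$, precisely because the slices $x^{(r)}$ need not individually have unit norm, so Lemma~\ref{lem1} cannot be invoked with a single fixed radius. Instead I control $\sum_{r=1}^d |x^{(r)}|\,|y^{(r)}|$ by Cauchy--Schwarz, and the constraint enters through an exchange of the order of summation: $\sum_{r=1}^d |x^{(r)}|^2=\sum_{r=1}^d\sum_{k=1}^n x_{kr}^2=\sum_{k=1}^n |\vec{x}_k|^2=n$, and likewise $\sum_{r=1}^d |y^{(r)}|^2=n$. Hence $\sum_{r=1}^d |x^{(r)}|\,|y^{(r)}|\le\bigl(\sum_r|x^{(r)}|^2\bigr)^{1/2}\bigl(\sum_r|y^{(r)}|^2\bigr)^{1/2}=n$, and so $|B(\vec{x},\vec{y})|\le n\,\sigma_1(A)$. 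Since $n\le n^2 d$ for all $n,d\ge 1$, this already implies the asserted bound $\pm n^2 d\,\sigma_1(A)$ (in fact a sharper one).

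If one wishes to reach the stated constant directly without the coordinate splitting, a cruder route is available: the operator norm dominates every entry, $|a_{kj}|=|e_k^TAe_j|\le\sigma_1(A)$, while each component obeys $|x_{kr}|\le|\vec{x}_k|=1$ and $|y_{jr}|\le|\vec{y}_j|=1$, so $|\vec{x}_k\cdot\vec{y}_j|\le\sum_{r=1}^d|x_{kr}|\,|y_{jr}|\le d$; summing the $n^2$ terms yields $|B|\le n^2 d\,\sigma_1(A)$ at once. The minimum follows by symmetry, since replacing $\vec{y}_j$ by $-\vec{y}_j$ preserves the constraint $|\vec{y}_j|=1$ and sends $B$ to $-B$, so the lower bound $-n^2 d\,\sigma_1(A)$ is inherited from the upper bound. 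The only genuinely substantive points are the entry/operator-norm comparison $|a_{kj}|\le\sigma_1(A)$ and the careful bookkeeping of the unit-norm constraint; beyond these I expect the argument to be routine.
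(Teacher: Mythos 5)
Your proof is correct, and it takes a genuinely different route from the paper. The paper argues by vectorization: it juxtaposes the blocks into $\vec{u},\vec{v}\in\mathbb{R}^{nd}$, rewrites $B$ as the bilinear form of the Kronecker product $A\otimes J$ (with $J$ the all-ones $d\times d$ matrix), relaxes the per-block constraints to a single norm constraint, and invokes Lemma~\ref{lem1} together with the computation $\sigma_1(A\otimes J)=d\,\sigma_1(A)$, obtained from the characteristic polynomial $t^{d-1}(t-d)$ of $J$; with radius $n$ this gives exactly $n^2d\,\sigma_1(A)$. You instead slice by coordinates, bound each slice by the homogeneous operator-norm inequality $u^TAv\le\sigma_1(A)\,|u|\,|v|$, and aggregate with Cauchy--Schwarz via the identity $\sum_{r=1}^d|x^{(r)}|^2=\sum_{k=1}^n|\vec{x}_k|^2=n$; your flagged point of care --- that the slices are not unit vectors, so Lemma~\ref{lem1} cannot be applied slice-wise at a fixed radius --- is exactly the right concern and is handled correctly. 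The comparison is instructive: the paper's reduction is uniform but doubly lossy, first because the Kronecker relaxation discards the block structure (the top singular vector of $A\otimes J$ has blocks of unequal lengths, which the constraint $|\vec{x}_k|=1$ forbids, costing a factor of $d$), and second because the juxtaposed vectors actually satisfy $|\vec{u}|=|\vec{v}|=\sqrt{n}$ rather than $n$, so even the paper's own method, applied with the correct radius, would yield $nd\,\sigma_1(A)$. Your slicing argument keeps the block constraints intact and delivers the strictly sharper bound $n\,\sigma_1(A)$, which is tight (e.g.\ for $A=I_n$) and trivially implies the stated $\pm n^2d\,\sigma_1(A)$. Your second, cruder route reproduces the paper's constant exactly from $|a_{kj}|\le\sigma_1(A)$ and $|\vec{x}_k\cdot\vec{y}_j|\le d$, though even there Cauchy--Schwarz gives $|\vec{x}_k\cdot\vec{y}_j|\le 1$ and hence $n^2\sigma_1(A)$; the symmetry argument $\vec{y}\mapsto-\vec{y}$ for the minimum is fine.
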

\begin{proof} Note that the constraint $|\vec{x}_k|=|\vec{y}_k|=1$ is contained in the condition $|\vec{x}|=|\vec{y}|=n$,
where $\vec{x}$ and $\vec{y}$ are the juxtaposition of $\vec{x}_k$ and $\vec{y}_k$ respectively, i.e. $\vec{x}=(\vec{x}_1, \ldots, \vec{x}_n)$.
We order the coordinates of $\vec{x}_k$ (or $\vec{y}_k$) lexicographically follows: $x_{11}, x_{12}, \ldots, x_{1d}$, $x_{21}, \ldots, x_{2d}$,  $\ldots,
x_{n1}, \ldots, x_{nd}$ and similarly for $y_{jr}$. Let $\vec{u}$ (resp. $\vec{v}$) be the juxtaposition of $\vec{x}_1, \ldots, \vec{x}_n$
(resp. $\vec{y}_1, \ldots, \vec{y}_n$, then the bilinear function $B(\vec{x}, \vec{y})$ can be written as $B(\vec{u}, \vec{v})=\sum_{i, j=1}^{nd}B_{ij}u_iv_j$, where $B=A\otimes J$, where $J$ is the $d\times d$ matrix with all entries $1$. The constraint is now $|\vec{u}|=|\vec{v}|=n$.

By Lemma \ref{lem1} the maximal value of the bilinear function $B(\vec{u}, \vec{v})$ under the constraint $|\vec{u}|=|\vec{v}|=n$
is $n^2\sigma_1(B)$, where $\sigma_1(B)$ is the first singular value of $B$.
Note that the eigenvalues of $B=A\otimes J$ are the products of those of $A$ and $J$ counting with multiplicities. It is easy to see the
characteristic polynomial of $J$ is $f(t)=t^{d-1}(t-d)$. So the first eigenvalue of $BB^t=d(AA^t\otimes J)$ is $d^2\sigma_1(A)^2$. Therefore
the maximum value of $B(\vec{u}, \vec{v})$ is $n^2d\sigma(A)$, which implies that
\begin{equation}
\sup_{\vec{x}_i,\vec{y}_j\in B^{(d)}} \sum\limits_{i,j=1}^na_{ij}\left<\vec{x}_i,\vec{y}_j\right>\leqslant n^2d\sigma_1(A)
\end{equation}
\end{proof}

%

Let's consider the case of quadratic form $q(x)=\Vec{x}^TA\Vec{x}=\sum_{i,j=1}^{2}a_{ij}\vec{x_i}\cdot\vec{x_j}$ under the
constraint $|\vec{x_i}|=1$, where $\vec{x_i}$ are viewed as column vectors in $\mathbb{R}^{2}$ and $A=(a_{ij})_{2\times 2}$ is symmetric.

The equation to determine an optimal value is
\begin{equation}
\begin{pmatrix} a_{11} & a_{12}\\ a_{21} & a_{22} \end{pmatrix}\begin{pmatrix} x_{11} \\x_{21}\end{pmatrix}
=\begin{pmatrix} \lambda_1 & 0\\ 0 & \lambda_2 \end{pmatrix}\begin{pmatrix} x_{11} \\x_{21}\end{pmatrix}
\end{equation}
and the optimal value is given by $\lambda_1+\lambda_2$. To find the maximum value, we consider the
Lagrange multiplier $f(\lambda_1, \lambda_2)=\lambda_1+\lambda_2-\mu(\lambda_1\lambda_2-\lambda_1a_{22}-\lambda_2a_{11}+a_{11}a_{22}-a_{12}a_{21})$.
Then for $\mu\neq 0$ we have
\begin{align}\label{e:mu1}
1-\mu(\lambda_2-a_{22})&=0\Longrightarrow\lambda_2=\dfrac{1}{\mu}+a_{22}\\ \label{e:mu2}
	1-\mu(\lambda_1-a_{11})&=0\Longrightarrow \lambda_1=\dfrac{1}{\mu}+a_{11} \\ \label{e:mu3}
\lambda_1\lambda_2-\lambda_1a_{22}-\lambda_2a_{11}+&a_{11}a_{22}-a_{12}a_{21}=0
\end{align}
Plugging \eqref{e:mu1}-\eqref{e:mu2} into \eqref{e:mu3}, we have
$\dfrac{1}{\mu}=\sqrt{a_{12}a_{21}}$.
Therefore the maximum value is $f(\lambda_1, \lambda_2)=a_{11}+a_{22}+2\sqrt{a_{12}a_{21}}=a_{11}+a_{22}+2|a_{12}|$.
	
Now we consider the special case of diagonal bilinear form. Let
$B(\vec{x},\vec{y})=\sum_{i,j=1}^2a_{ij}\vec{x}_i^T\vec{y}_j$, where $A=diag(a_{11}, a_{22})$ and $\vec{x_i},\vec{y_i}$ are unit column vectors in $\mathbb{R}^{2}$.
	
It is easy to see that $\Lambda=M$ in this case, and the existence of generalized singular vectors implies that
\begin{align}\label{e:det1}
		&\text{det}\left(A\Lambda^{-1}A-\Lambda\right)=(\dfrac{a_{11}^2}{\lambda_1}-\lambda_1)(\dfrac{a_{22}^2}{\lambda_2}-\lambda_2),\\ \label{e:det2}
		&=\dfrac{a_{11}^2a_{22}^2}{\lambda_1\lambda_2}+\lambda_1\lambda_2-\dfrac{\lambda_2}{\lambda_1}a_{11}^2-\dfrac{\lambda_1}{\lambda_2}a_{22}^2.
	\end{align}
Consider the Lagrange multiplier:
\begin{equation} F(\lambda_1,\lambda_2,\mu)=\lambda_1+\lambda_2-\mu(\dfrac{a_{11}^2a_{22}^2}{\lambda_1\lambda_2}+\lambda_1\lambda_2-\dfrac{\lambda_2}{\lambda_1}a_{11}^2
-\dfrac{\lambda_1}{\lambda_2}a_{22}^2).
\end{equation}
Then
\begin{align}\label{e:LM4}
		1-\mu\left(-\dfrac{a_{11}^2a_{22}^2}{2\lambda_1^2\lambda_2}+\lambda_2+\dfrac{\lambda_2}{2\lambda_1^2}a_{11}^2
-\dfrac{1}{\lambda_2}a_{22}^2\right)=0\\ \label{e:LM5}
		1-\mu\left(-\dfrac{a_{11}^2a_{22}^2}{2\lambda_1\lambda_2^2}+\lambda_1-\dfrac{1}{\lambda_1}a_{11}^2+\dfrac{\lambda_2}{\lambda_1}^2a_{22}^2\right)=0\\ \label{e:LM6} \dfrac{a_{11}^2a_{22}^2}{\lambda_1\lambda_2}+\lambda_1\lambda_2-\dfrac{\lambda_2}{\lambda_1}a_{11}^2-\dfrac{\lambda_1}{\lambda_2}a_{22}^2=0
	\end{align}
	Plugging \eqref{e:LM6} into \eqref{e:LM4} and using \eqref{e:LM5} we get that 
$$\begin{cases}
		\lambda_1\lambda_2-\dfrac{\lambda_1}{\lambda_2}a_{22}^2=\lambda_1^2-a_{11}^2
		\\a_{11}^2\dfrac{\lambda_2}{\lambda_1}-\dfrac{1}{\lambda_1\lambda_2}a_{11}^2a_{22}^2=a_{11}^2-\dfrac{1}{\lambda_2}a_{11}^4
	\end{cases}$$
	then plug them into \eqref{e:LM6}, and we get $$\dfrac{1}{\lambda_1^2}a_{11}^4+\lambda_1^2=2a_{11}^2,$$
	so $\lambda_1=\pm a_{11}$, similarly $\lambda_2=\pm a_{22}$. Therefore the maximum is given by
	$f(\lambda_1,\lambda_2)=\lambda_1+\lambda_2=|a_{11}\pm a_{22}|$.
	
	Now we consider the general symmetric case in two variables.
 Let $B(\vec{x},\vec{y})=\vec{x}^TA\vec{y}=\sum_{i,j=1}^2a_{ij}\vec{x_i}^T\vec{y_j}$, where $A=$ is symmetric,
$|\vec{x_i}|=|\vec{y_i}|=1$, and $\vec{x_i},\vec{y_j}$ are column vectors in  $\mathbb{R}^{2}$
	
	Since $A$ is symmetric, $\Lambda=M$. So the extremal value is $\lambda_1+\lambda_2$, and for $r=1, 2$
\begin{align}
A\begin{pmatrix}\frac{1}{\lambda_1}& 0\\0 &\frac{1}{\lambda_2}\end{pmatrix}A\begin{pmatrix}y_{1r}\\y_{2r}\end{pmatrix}
=\begin{pmatrix}\lambda_1&0 \\0 &\lambda_2\end{pmatrix}\begin{pmatrix}y_{1r}\\y_{2r}\end{pmatrix}\\
A\begin{pmatrix}\frac{1}{\lambda_1}&0\\0&\frac{1}{\lambda_2}\end{pmatrix}A\begin{pmatrix}x_{1r}\\x_{2r}\end{pmatrix}
=\begin{pmatrix}\lambda_1&0\\0&\lambda_2\end{pmatrix}\begin{pmatrix}x_{1r}\\x_{2r}\end{pmatrix}
\end{align}
	
Suppose $B(x, y)$ reaches an extremal value at $\vec{x}_i$, $\vec{y}_i$. This particular pair $\vec{x}_i$, $\vec{y}_i$ can be viewed as vectors. If the
span of the vectors is of the full rank, then $A\Lambda^{-1} A=\Lambda$.
If $A$ is diagonal, then $a_{11}^2=\lambda_1^2, a_{22}^2=\lambda_2^2$, so $\lambda_i=\pm a_{ii}$ and
the maximum value is $|a_{11}\pm a_{22}|$, as we already obtained above.

Suppose $a_{12}\neq 0$, then the entries of $A$ satisfy the equations:
\begin{align}\label{e:A21}
&\lambda_1^{-1}a_{11}^2+\lambda_2^{-1}a_{12}^2=\lambda_1\\ \label{e:A22}
&\lambda_1^{-1}a_{12}^2+\lambda_2^{-1}a_{22}^2=\lambda_2\\ \label{e:A23}
&\lambda_1^{-1}a_{11}+\lambda_2^{-1}a_{22}=0.
\end{align}
Assuming $a_{11}a_{22}\neq 0$, then $\lambda_1/\lambda_2=-a_{11}/a_{22}$.
Plugging \eqref{e:A23} into \eqref{e:A21} leads to
$\lambda_1^2=\frac{a_{11}}{a_{22}}\det{A}$. Similarly $\lambda_2^2=\frac{a_{22}}{a_{11}}\det{A}$.
So the maximum is given by
$$\lambda_1+\lambda_2=\pm \sqrt{detA\frac{a_{11}}{a_{22}}}\frac{a_{11}-a_{22}}{a_{11}}$$

Equation \eqref{e:A23} implies that if one of $a_{11}, a_{22}$ is zero, then the other must be zero. Suppose $a_{11}=a_{22}=0$, then $|B(\vec{x},\vec{y})|=|a_{12}\vec{x_1}^T\vec{y_2}+a_{21}\vec{x_2}^T\vec{y_1}|\leq |a_{12}|+|a_{21}|$. Therefore the
maximum value is $2|a_{12}|$, since it is attainable.

Now we assume that the extremal point $(\vec{x}_i, \vec{y}_j)$ has proportional coordinates. In particular,
\begin{equation}
\det{\begin{pmatrix} x_{11} & x_{21}\\ x_{12}  & x_{22}\end{pmatrix}}=\det{\begin{pmatrix} y_{11} & y_{21}\\ y_{12}  & y_{22}\end{pmatrix}}=0
\end{equation}

Suppose $(x_{11}, x_{12})=k_1(x_{21}, x_{22})$ and $(y_{11}, y_{12})=k_2(y_{21}, y_{22})$, where the scaling factors $k_i$ must be the same otherwise
one can still derive that $A\Lambda^{-1} A=\Lambda$. In other words, we can assume that
the nullity of $A\Lambda^{-1} A-\Lambda$ is one, and $k_1=k_2=k=\pm 1$. So
\begin{align}
	&k(\lambda_1^{-1}a_{11}^2+\lambda_2^{-1}a_{12}^2-\lambda_1)+(\lambda_1^{-1}a_{11}a_{12}+\lambda_2^{-1}a_{12}a_{22})=0\\ \label{e:C11}
	&k(\lambda_1^{-1}a_{11}a_{12}+\lambda_2^{-1}a_{12}a_{22})+(\lambda_1^{-1}a_{12}^2+\lambda_2^{-1}a_{22}^2-\lambda_2)=0.
\end{align}
Then $\lambda_1=\pm(a_{12}+a_{11}k)/k$,$\lambda_2=\pm(a_{22}+a_{12}k)$. So the extremal value is
$$
\lambda_1+\lambda_2=\pm(a_{11}+a_{22}\pm 2a_{12}).
$$

\bibliographystyle{plain}

\end{document}